\newcommand{\Id}{\mathrm{Id}}
\newcommand{\Comp}{\mathsf{Comp}}
\newcommand{\pr}{\mathrm{pr}}
\newcommand{\id}{\mathrm{id}}
\newcommand{\A}{\mathcal A}
\newcommand{\C}{\mathcal C}
\newcommand{\D}{\mathcal D}
\newcommand{\R}{\mathbb R}
\newcommand{\B}{\mathbb B}
\newcommand{\T}{\mathbb T}
\newcommand{\F}{\mathbb F}
\newcommand{\M}{\mathbb M}
\newtheorem{theorem}{Theorem}
\newtheorem{lemma}{Lemma}
\newtheorem{corollary}{Corollary}
\begin{document}

\title{Nash equilibrium with Sugeno payoff}
\author{Taras Radul}

\maketitle

Institute of Mathematics, Casimirus the Great University, Bydgoszcz, Poland;
\newline
Department of Mechanics and Mathematics, Lviv National University,
Universytetska st.,1, 79000 Lviv, Ukraine.
\newline
e-mail: tarasradul\@ yahoo.co.uk

\textbf{Key words and phrases:}  Nash equilibrium, game in capacities, Sugeno integral


\begin{abstract} This paper is devoted to  Nash equilibrium for games in capacities. Such games with payoff expressed by Choquet
integral were considered in \cite{KZ} and existence of Nash equilibrium was proved. We also consider games in capacities but with expected payoff expressed by Sugeno
integral. We prove existence of Nash equilibrium using categorical methods and abstract convexity theory.
 \end{abstract}

\section{Introduction}

The classical Nash equilibrium theory is based on fixed point theory and was developed in frames of linear convexity. The mixed strategies of a player are probability (additive) measures on a set of pure strategies. But an interest to Nash equilibria in more general frames is rapidly growing in last decades. There are also  results about  Nash equilibrium  for non-linear convexities. For instance, Briec and Horvath proved in  \cite{Ch} existence of Nash equilibrium point for $B$-convexity and MaxPlus convexity. Let us remark that MaxPlus convexity is
related to idempotent (Maslov) measures in the same sense as linear convexity is related to probability measures.

We can use additive measures only when we know precisely probabilities of all events considered in a game. However it is not the case
in many modern economic models. The decision theory under uncertainty considers a model when probabilities of states are either not known or imprecisely specified. Gilboa \cite{Gil} and Schmeidler  \cite{Sch} axiomatized  expectations expressed by Choquet
integrals attached to non-additive measures called capacities, as a formal approach to decision-making under uncertainty. Dow and Werlang \cite{DW} generalized this approach for two players game where belief of each player about a choice of the strategy by the other player is a capacity. This result was extended onto games with arbitrary finite number of players \cite{EK}.


Kozhan and Zaricznyi introduced in \cite{KZ} a formal mathematical generalization of Dow and Werlang's concept of Nash equilibrium of a game where players are allowed to form non-additive beliefs about opponent's decision but also to play their mixed non-additive strategies. Such game is called by authors game in capacities. The expected payoff function was there defined using a Choquet integral.  Kozhan and Zaricznyi proved existence theorem using a linear convexity on the space of capacities which is preserved by Choquet integral. There was stated a problem of existence of Nash equilibrium for another functors \cite{KZ}.

An alternative to so-called Choquet expected utility model is the qualitative decision theory.   The corresponding expected utility is expressed by Sugeno integral. See for example papers \cite{DP}, \cite{DP1}, \cite{CH1}, \cite{CH} and others.  Sugeno integral chooses a median value of utilities which is qualitative counterpart of the averaging operation by Choquet integral.

Following \cite{KZ} we introduce in this paper the general mathematical concept of Nash equilibrium of a game in capacities. However, motivated by the qualitative approach, we consider expected payoff function defined by Sugeno integral. To prove existence theorem for this concrete case, we consider more general framework which could unify all mentioned before situations and give us a method to prove theorems about existence of Nash equilibrium in different contexts. We use categorical methods and abstract convexity theory.

The notion of convexity considered in this paper is
considerably broader then the classic one; specifically, it is not
restricted to the context of linear spaces. Such convexities
appeared in the process of studying different structures like
partially ordered  sets, semilattices, lattices, superextensions
etc. We base our approach on the notion of topological convexity
from \cite{vV} where the general convexity theory is covered from axioms
to application in different areas. Particularly, there is proved Kakutani fixed point theorem for abstract convexity.

 Above mentioned constructions of the spaces of probability measures, idempotent measures and capacities are functorial and could be completed to monads (see \cite{RZ}, \cite{Z} and \cite{NZ} for more details). There was introduced  in \cite{R1} a convexity structure on
each $\F$-algebra  for any monad $\F$ in the category of compact Hausdorff spaces and continuous maps. Particularly, topological properties of monads with binary convexities were investigated.

We prove a counterpart of Nash theorem for an abstract convexity in this paper. Particularly, we consider binary convexities. These results we use to obtain Nash theorem for algebras of any L- monad with binary convexity. Since capacity monad is an L-monad with binary convexity \cite{R2}, we obtain as corollary the corresponding result for capacities.


\section{Games in capacities} By $\Comp$ we denote the category of compact Hausdorff
spaces (compacta) and continuous maps. For each compactum $X$ we denote by $C(X)$ the Banach space of all
continuous functions on $X$ with the usual $\sup$-norm. In what follows, all
spaces and maps are assumed to be in $\Comp$ except for $\R$ and
maps in sets $C(X)$ with $X$ compact Hausdorff.

We need the definition of capacity on a compactum $X$. We follow a terminology of \cite{NZ}.
A function $c$ which assign each closed subset $A$ of $X$ a real number $c(A)\in [0,1]$ is called an {\it upper-semicontinuous capacity} on $X$ if the three following properties hold for each closed subsets $F$ and $G$ of $X$:

1. $c(X)=1$, $c(\emptyset)=0$,

2. if $F\subset G$, then $c(F)\le c(G)$,

3. if $c(F)<a$, then there exists an open set $O\supset F$ such that $c(B)<a$ for each compactum $B\subset O$.

We extend a capacity $c$ to all open subsets $U\subset X$ by the formula $c(U)=\sup\{c(K)\mid K$ is a closed subset of $X$ such that $K\subset U\}$.

It was proved in \cite{NZ} that the space $MX$ of all upper-semicontinuous  capacities on a compactum $X$ is a compactum as well, if a topology on $MX$ is defined by a subbase that consists of all sets of the form $O_-(F,a)=\{c\in MX\mid c(F)<a\}$, where $F$ is a closed subset of $X$, $a\in [0,1]$, and $O_+(U,a)=\{c\in MX\mid c(U)>a\}$, where $U$ is an open subset of $X$, $a\in [0,1]$. Since all capacities we consider here are upper-semicontinuous, in the following we call elements of $MX$ simply capacities.

There is considered in \cite{KZ} a tensor product for capacities, which is a continuous map $\otimes:MX_1\times\dots\times MX_n\to M(X_1\times\dots\times X_n)$. Note that, despite the space of capacities contains the space of probability measures,  the tensor product of capacities does not extend tensor product of probability measures.

Due to Zhou \cite{Zh} we can identify the set $MX$ with some set of functionals defined on the space $C(X)$  using the
Choquet integral. We consider for each $\mu\in MX$ its value on a function $f\in  C(X)$ defined by the formulae
$$\mu(f)=\int fd\mu=\int_0^\infty\mu\{x\in X|f(X)\ge t\}dt+\int^0_{-\infty}(\mu\{x\in X|f(X)\ge t\}-1)dt$$

Let us remember the definition of Nash equilibrium. We consider a $n$-players game $f:X=\prod_{i=1}^n X_i\to\R^n$ with compact Hausdorff spaces of strategies $X_i$. The coordinate function $f_i:X\to \R$ we call payoff function of $i$-th player. For $x\in X$ and $t_i\in X_i$ we use the notation $(x;t_i)=(x_1,\dots,x_{i-1},t_i,x_{i+1},\dots,x_n)$. A point $x\in X$ is called a Nash equilibrium point if for each $i\in\{1,\dots,n\}$ and for each $t_i\in X_i$ we have $f_i(x;t_i)\le f_i(x)$. Kozhan and  Zarichnyj proved in \cite{KZ} existence
of Nash equilibrium for  game in capacities $ef:\prod_{i=1}^n MX_i\to\R^n$ with expected payoff functions defined by $$ef_i(\mu_1,\dots,\mu_n)=\int_{X_1\times\dots\times X_n}f_id(\mu_1\otimes\dots\otimes\mu_n)$$

Let us remark that the Choquet functional representation of capacities preserves the natural linear convexity structure on $MX$ which was used in the proof of existence of Nash equilibrium \cite{KZ}. However this representation does not preserve the capacity monad structure. (We will introduce the monad notion in Section 4).

    There was introduced \cite{R2} another functional representation of capacities using Sugeno integral (see also \cite{NR} for similar result). This representation preserves the capacity monad structure. Let us describe such representation. Fix any increasing homeomorphism $\psi:(0,1)\to\R$. We put additionally $\psi(0)=-\infty$, $\psi(1)=+\infty$ and assume $-\infty<t<+\infty$ for each $t\in\R$. We consider for each $\mu\in MX$ its value on a function $f\in  C(X)$ defined by the formulae
$$\mu(f)=\int_X^{Sug} fd\mu=\max\{t\in\R\mid \mu(f^{-1}([t,+\infty)))\ge\psi^{-1}(t)\}$$

    Let us remark that we use some modification of Sugeno integral. The original Sugeno integral \cite{Su} "ignores" function values outside the interval $[0,1]$ and we introduce a "correction" homeomorphism $\psi$ to avoid this problem. Now, following \cite{KZ}, we consider a game in capacities $sf:\prod_{i=1}^n MX_i\to\R^n$, but motivated by \cite{DP}, we consider Sugeno expected payoff functions defined by $$sf_i(\mu_1,\dots,\mu_n)=\int^{Sug}_{X_1\times\dots\times X_n}f_id(\mu_1\otimes\dots\otimes\mu_n)$$

    The main goal of this paper is to prove existence of Nash equilibrium for such game. Since Sugeno integral does not preserve linear convexity on $MX$ we can not use methods from \cite{KZ}. We will use some another natural convexity structure which has the binarity property (has Helly number 2). We will obtain some general result for such convexities which could be useful to investigate existence of Nash equilibrium for diverse construction. Finally, we will obtain the result for capacities as a corollary of  these general results.

\section{Binary convexities}

A family $\C$ of closed subsets of a compactum $X$ is
called a {\it convexity} on $X$ if $\C$ is stable for intersection
and contains $X$ and the empty set. Elements of $\C$ are called
$\C$-convex (or simply convex). Although we follow general concept  of abstract convexity from \cite{vV}, our definition is different.
We consider only closed convex sets. Such structure is called closure structure in \cite{vV}. The whole family of convex
sets in the sense of \cite{vV} could be obtained by the operation of
union of up-directed families. In what follows, we assume that each convexity contains all singletons.

A convexity $\C$ on $X$ is called $T_2$ if for each distinct $x_1$, $x_2\in
X$ there exist $S_1$, $S_2\in\C$ such that $S_1\cup S_2=X$,
$x_1\notin S_2$ and $x_2\notin S_1$. Let us remark  that if a convexity $\C$ on a compactum  $X$ is $T_2$,  then $\C$ is a  subbase for closed sets.
A convexity $\C$ on $X$ is called $T_4$ (normal) if for each disjoint $C_1$, $C_2\in
\C$ there exist $S_1$, $S_2\in\C$ such that $S_1\cup S_2=X$,
$C_1\cap S_2=\emptyset$ and $C_2\cap S_1=\emptyset$.

Let $(X,\C)$, $(Y,\D)$ be two
compacta with convexity structures. A continuous map $f:X\to Y$ is
called {\it CP-map} (convexity preserving map) if $f^{-1}(D)\in\C$
for each $D\in\D$; $f$ is
called {\it CC-map} (convex-to-convex map) if $f(C)\in\D$
for each $C\in\C$.

By a multimap (set-valued map) of a set $X$ into a set $Y$ we mean a map $F:X\to 2^Y$. We use the notation $F:X\multimap Y$. If $X$ and $Y$ are topological spaces, then a multimap $F:X\multimap Y$ is called upper semi-continuous (USC) provided for each open set $O\subset Y$ the set $\{x\in X\mid F(x)\subset O\}$ is open in $X$. It is well-known that a multimap is USC iff its graph is closed in $X\times Y$.

Let  $F:X\multimap X$ be a  multimap. We say that a point $x\in X$ is a fixed point of $F$ if $x\in F(x)$.
The following counterpart of Kakutani theorem for abstract convexity is a partial case of Theorem 3 from \cite{W} (it also could be obtain combining Theorem 6.15, Ch.IV and Theorem 4.10, Ch.III from \cite{vV}).

\begin{theorem}\label{KA} Let $\C$ be a normal convexity on a compactum $X$ such that all convex sets are connected and $F:X\multimap X$ is a USC multimap with values in $\C$. Then $F$ has a fixed point.
\end{theorem}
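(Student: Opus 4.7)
My plan is to reduce Theorem~\ref{KA} to Schauder's fixed point theorem by extracting a continuous selection from $F$, using normality of $\C$ together with connectedness of convex sets in the spirit of Michael's selection theorem for linear convexities.

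\emph{Step 1 (continuous selection).} I would first show that every USC multimap $F:X\multimap X$ with values in $\C$ admits a continuous selection $s:X\to X$, that is, $s(x)\in F(x)$ for every $x\in X$. The construction mimics Michael's classical argument: given a finite open cover of $X$, normality of $\C$ produces convex sets that separate disjoint closed convex pieces, which lets one refine the cover and consistently choose, on each element of the refinement, a point of $F(x)$ lying inside a common $\C$-convex cell. Connectedness of convex sets is what allows these local choices to be patched into a continuous approximation rather than merely a measurable one. Iterating with successively finer covers and passing to a uniform limit yields a genuine continuous selection $s$, along the scheme developed in~\cite{vV}.

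\emph{Step 2 (single-valued fixed point).} Under the standing hypotheses on $\C$, finite $\C$-convex hulls of points can be continuously parametrized, so $X$ acquires the structure of an absolute retract; in particular $X$ has the fixed point property for continuous single-valued self-maps. Applying this to the selection $s$ produced in Step~1 gives a point $x_0\in X$ with $s(x_0)=x_0$, and by the selection property $x_0\in F(x_0)$, which is the conclusion sought.

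\emph{Main obstacle.} The substantive work is in Step~1. In the classical linear setting one averages partial selections using convex combinations; here no such averaging operation is available, and the argument must splice local choices together piece by piece. Normality is exactly what lets one separate disjoint closed convex sets by complementary convex sets, providing the analog of a partition of unity compatible with $\C$; connectedness of convex sets is what prevents the glued selections from developing discontinuities across overlap regions. Once both ingredients are in place, Michael's inductive machinery transplants essentially verbatim, and the remainder of the argument is routine.
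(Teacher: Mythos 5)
There is a genuine gap, and it is fatal to the whole strategy: Step~1 is false as stated. Michael's selection theorem applies to \emph{lower} semicontinuous multimaps; an \emph{upper} semicontinuous multimap with closed convex values in general admits no continuous selection, and the failure already occurs within the hypotheses of Theorem~\ref{KA}. Take $X=[0,1]$ with the standard convexity of closed subintervals (normal, all convex sets connected, even binary) and define $F(x)=\{1\}$ for $x<1/2$, $F(1/2)=[0,1]$, $F(x)=\{0\}$ for $x>1/2$. This $F$ is USC (its graph is closed) with values in $\C$, but any selection must equal $1$ on $[0,1/2)$ and $0$ on $(1/2,1]$, so no continuous selection exists. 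This is exactly why Kakutani's theorem is not a corollary of Brouwer's via exact selections: the standard arguments use \emph{approximate} continuous selections, i.e.\ continuous maps whose graphs lie in a prescribed neighbourhood of the graph of $F$, followed by a limiting argument that exploits closedness of the graph --- your ``uniform limit of successively finer approximations'' cannot converge to a genuine selection here because none exists. Step~2 is also unsupported as written (nothing in the hypotheses yields a continuous parametrization of convex hulls or an absolute-retract structure on $X$ without substantial additional machinery), but that point is moot once Step~1 collapses.

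For comparison: the paper does not prove Theorem~\ref{KA} at all; it is quoted as a special case of Theorem~3 of \cite{W}, and alternatively as a combination of Theorem~6.15, Ch.~IV and Theorem~4.10, Ch.~III of \cite{vV}. If you want to supply an actual proof, the workable route is the approximate-selection one: use normality to separate disjoint closed convex sets by complementary convex ``half-spaces'' and build, for each finite cover, a continuous map whose graph approximates that of $F$; apply the single-valued fixed point property of compact normal convexities with connected convex sets (this is where connectedness genuinely enters, via the van de Vel results cited); and pass to the limit over finer covers, using upper semicontinuity to conclude that the limit point lies in its own image under $F$.
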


Let $\C$ be a family of subsets of a compactum $X$. We say that  $\C$ is {\it linked} if the intersection of every  two elements is non-empty. A convexity $\C$ is called {\it binary} if the intersection of every  linked subsystem of $\C$ is non-empty.

\begin{lemma}\label{BC} Let $\C$ be a $T_2$ binary convexity on a continuum $X$. Then $\C$ is normal and all convex sets are connected.
\end{lemma}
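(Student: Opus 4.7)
The plan is to first establish that $\C$ is normal, and then use normality together with $X$ being a continuum to deduce that every convex set is connected.

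\textbf{Normality.} Let $C_1, C_2 \in \C$ be disjoint. For each pair $(x,y) \in C_1 \times C_2$, the $T_2$ axiom yields a screen $(A_{x,y}, B_{x,y}) \in \C \times \C$ with $A_{x,y} \cup B_{x,y} = X$, $x \in A_{x,y}$, $y \in B_{x,y}$, $x \notin B_{x,y}$, and $y \notin A_{x,y}$. I would invoke binarity in two stages. First, fixing $y \in C_2$, the family $\{B_{x,y} : x \in C_1\}$ is pairwise linked (every member contains $y$), so binarity gives a convex set $B^y := \bigcap_{x \in C_1} B_{x,y}$ that contains $y$ and misses $C_1$ (since $x \notin B_{x,y} \supset B^y$ for every $x \in C_1$); symmetrically, for each $x \in C_1$ one obtains a convex $A^x \ni x$ disjoint from $C_2$. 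Second, applying binarity once more to a carefully chosen linked subfamily built from $\{A^x\}_{x \in C_1}$ and $\{B^y\}_{y \in C_2}$, I would extract a global screen $(S_1, S_2) \in \C \times \C$ with $S_1 \cup S_2 = X$, $C_1 \subset S_1$, $C_2 \subset S_2$, $C_1 \cap S_2 = \emptyset$ and $C_2 \cap S_1 = \emptyset$.

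\textbf{Connectedness.} Suppose for contradiction some $C \in \C$ is disconnected, say $C = F_1 \sqcup F_2$ with $F_1, F_2$ disjoint, nonempty and closed in $X$. Fix $a \in F_1$ and $b \in F_2$; convexity of $C$ gives $[a,b] := \bigcap\{K \in \C : \{a,b\} \subset K\} \subset C$. Using $T_2$ and binarity applied to the disjoint closed sets $F_1 \cap [a,b]$ and $F_2 \cap [a,b]$, I would reduce to the case in which $[a,b] = P \sqcup Q$ is a disjoint union of two convex sets $P, Q \in \C$ with $a \in P$ and $b \in Q$. The normality just established then supplies a screen $(S_1, S_2)$ with $S_1 \cup S_2 = X$, $P \subset S_1$, $Q \subset S_2$, $P \cap S_2 = Q \cap S_1 = \emptyset$. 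Since $X$ is a continuum and $S_1, S_2$ are closed, nonempty, and cover $X$, we must have $S_1 \cap S_2 \neq \emptyset$. Then the triple $\{S_1, S_2, [a,b]\}$ is linked ($a \in S_1 \cap [a,b]$, $b \in S_2 \cap [a,b]$, and $S_1 \cap S_2 \neq \emptyset$), so binarity forces $S_1 \cap S_2 \cap [a,b] \neq \emptyset$. But $[a,b] = P \cup Q$ together with $P \cap S_2 = \emptyset$ and $Q \cap S_1 = \emptyset$ gives $[a,b] \cap S_1 \cap S_2 = \emptyset$, a contradiction.

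\textbf{Main obstacle.} The technical heart is the second binarity stage in the normality proof, and the analogous reduction in the connectedness proof: pointwise separations must be glued into convex sets that still cover $X$, and this is where $T_2$ and binarity have to be combined delicately. The naive pair $(A^x, B^y)$ need not cover $X$ (their intersections with the original screens only shrink), and the convex hulls of $F_i \cap [a,b]$ need not be disjoint, so the linked family feeding the second application of binarity must be chosen to preserve both the covering and the separation conditions simultaneously. Once this step is carried out, the rest of the argument is a fairly standard exploitation of the continuum hypothesis on $X$.
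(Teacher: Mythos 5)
Your proposal stalls at exactly the two places you flag yourself, and those are not routine details but the entire content of the lemma. For normality, the first binarity stage is fine ($B^y=\bigcap_{x\in C_1}B_{x,y}$ is convex, contains $y$, and misses $C_1$), but the second stage --- producing $S_1,S_2\in\C$ with $S_1\cup S_2=X$ and $C_1\cap S_2=C_2\cap S_1=\emptyset$ --- cannot be obtained by intersecting yet another linked family: binarity only ever produces smaller convex sets, whereas a screen needs a pair of convex sets large enough to cover $X$, and the covering property of the pointwise screens $(A_{x,y},B_{x,y})$ is destroyed as soon as you intersect over one index. The actual proof (this is Lemma 3.1 of \cite{RZ}, which the paper simply cites) goes through a maximality argument: one extends a convex set containing $C_1$ and disjoint from $C_2$ to a maximal such set via Zorn's lemma and uses $T_2$ plus binarity to show it is a half-space whose complementary half-space absorbs $C_2$. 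Nothing in your sketch supplies this. For connectedness, the reduction to ``$[a,b]=P\sqcup Q$ with $P,Q\in\C$ disjoint'' is unjustified for the reason you state yourself: the convex hulls of $F_1\cap[a,b]$ and $F_2\cap[a,b]$ need be neither disjoint nor exhaustive, and normality applies to convex sets, not to the closed pieces $F_i\cap[a,b]$. Your endgame (the triple $\{S_1,S_2,C\}$ is linked, binarity forces $S_1\cap S_2\cap C\ne\emptyset$, contradiction) is a genuinely nice observation and would close the argument if the reduction held, but as written neither assertion of the lemma is proved.

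For comparison, the paper's proof is two lines: normality is quoted from Lemma 3.1 of \cite{RZ}, and connectedness follows from the van Mill--van de Vel nearest-point map $h_A(x)=\bigcap\{C\in\C\mid x\in C,\ C\cap A\ne\emptyset\}$; this family is linked and each member meets $A$, so binarity makes $h_A$ a well-defined retraction of $X$ onto $A$, exhibiting every convex set as a continuous image of the continuum $X$. If you want a self-contained argument you should either reproduce the half-space construction for normality or adopt this retraction argument for connectedness; your current text does neither.
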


\begin{proof} The first assertion of the lemma is proved in Lemma 3.1  \cite{RZ}. Let us prove the second one. Consider any $A\in\C$. There was defined in \cite{MV} a  retraction $h_A:X\to A$ by the formula $h_A(x)=\cap\{C\in\C\mid x\in C$ and $C\cap A\ne\emptyset\}$. Hence $A$ is connected and the lemma is proved.
\end{proof}

Now we can reformulate Theorem  \ref{KA} for binary convexities.

\begin{theorem}\label{KB} Let $\C$ be a  $T_2$ binary convexity on a continuum $X$ and $F:X\multimap X$ is a USC multimap with values in $\C$. Then $F$ has a fixed point.
\end{theorem}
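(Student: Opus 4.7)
The plan is to deduce Theorem \ref{KB} as a direct corollary of Theorem \ref{KA} by checking that the hypotheses of the more general Kakutani-type theorem are met under the extra assumption of binarity.

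First I would unpack what needs verification. Theorem \ref{KA} asks for three ingredients about the convexity $\C$: normality ($T_4$), connectedness of every convex set, and a USC multimap $F\colon X\multimap X$ whose values lie in $\C$. The last is given by hypothesis, so only the first two properties of $\C$ require attention. This is precisely what Lemma \ref{BC} supplies: under the standing assumptions of Theorem \ref{KB} (namely, $\C$ is a $T_2$ binary convexity on a continuum $X$), the lemma asserts that $\C$ is normal and that every member of $\C$ is connected. Thus both missing hypotheses of Theorem \ref{KA} are delivered at once.

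The proof is therefore extremely short: quote Lemma \ref{BC} to obtain normality of $\C$ and connectedness of all convex sets; then apply Theorem \ref{KA} to the given multimap $F$ to produce a point $x\in X$ with $x\in F(x)$.

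There is essentially no obstacle, since the nontrivial content has been placed in Lemma \ref{BC} (whose first clause cites \cite{RZ} and whose second uses the retraction $h_A$ of \cite{MV}) and in Theorem \ref{KA} (attributed to \cite{W} and \cite{vV}). The only thing worth double-checking is that the continuum hypothesis on $X$ (connectedness plus compactness) is actually used only through Lemma \ref{BC}, not needed independently in Theorem \ref{KA}; this is immediate from the way the two results are stated. Hence the proof reduces to a two-line citation.
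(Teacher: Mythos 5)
Your proposal is correct and matches the paper exactly: Theorem \ref{KB} is stated there as an immediate reformulation of Theorem \ref{KA}, with Lemma \ref{BC} supplying normality and connectedness of convex sets from the $T_2$ binary hypothesis on a continuum. Nothing further is needed.
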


Now, let $\C_i$ be a convexity on $X_i$. We say that the function $f_i:X\to\R$ is quasi concave by $i$-th coordinate if we have $(f_i^x)^{-1}([t;+\infty))\in\C_i$ for each $t\in\R$ and $x\in X$ where $f_i^x:X_i\to\R$ is a function defined as follows $f_i^x(t_i)=f_i(x;t_i)$ for $t_i\in X_i$.

\begin{theorem}\label{NN} Let $f:X=\prod_{i=1}^n X_i\to\R^n$ be a game with a  normal convexity  $\C_i$ defined  on each compactum $X_i$ such that all convex sets are connected, the function $f$ is continuous  and the function $f_i:X\to\R$ is quasi concave by $i$-th coordinate for each $i\in\{1,\dots,n\}$. Then there exists a Nash equilibrium point.
\end{theorem}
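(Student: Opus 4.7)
My approach is to endow $X=\prod_{i=1}^n X_i$ with the natural product convexity and apply Theorem \ref{KA} to the joint best-response multimap, mirroring the classical Nash proof inside the abstract convexity framework. For the product convexity I take $\C=\{\prod_{i=1}^n C_i\mid C_i\in\C_i\}$. This family is closed under intersection coordinate-wise, contains $X$, $\emptyset$, and all singletons, so it is a convexity on the compactum $X$, and each of its members, being a product of connected sets, is connected. Normality follows by a one-coordinate reduction: given disjoint non-empty $A=\prod A_i$ and $B=\prod B_i$ in $\C$, some index $i$ satisfies $A_i\cap B_i=\emptyset$; pick separating $S_i,T_i\in\C_i$ by normality of $\C_i$, and lift them to the cylinders $\pi_i^{-1}(S_i),\pi_i^{-1}(T_i)\in\C$, which cover $X$ and separate $A$ from $B$ as required.

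I then introduce the joint best-response correspondence. For each $i$ set $v_i(x)=\max_{t_i\in X_i}f_i(x;t_i)$ and $B_i(x)=(f_i^x)^{-1}([v_i(x),+\infty))$, and put $F(x)=\prod_{i=1}^n B_i(x)$. A point $x\in X$ is a fixed point of $F$ iff $x_i\in B_i(x)$ for every $i$, i.e.\ $f_i(x)\ge f_i(x;t_i)$ for every $i$ and every $t_i\in X_i$ --- exactly the Nash equilibrium condition. To invoke Theorem \ref{KA} I verify: $F(x)$ is non-empty because each continuous $f_i^x$ attains its maximum on the compactum $X_i$; $F(x)\in\C$ because $B_i(x)$ is the upper level set of $f_i^x$ at a real height $v_i(x)$, hence lies in $\C_i$ by the quasi-concavity hypothesis; and $F$ is USC because a Berge-type argument (continuity of $f$ plus compactness of $X_i$) yields continuity of each $v_i:X\to\R$, so that the graphs of the $B_i$ and of $F$ are closed in the respective compact product spaces.

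With all hypotheses of Theorem \ref{KA} established, $F$ has a fixed point, which is the Nash equilibrium sought. The main technical point I anticipate is the normality verification for the product convexity: the separating-cylinder construction is natural, but it relies on the fact that the members of $\C$ are exactly the products $\prod C_i$, and the degenerate cases (one of $A$, $B$ empty or equal to $X$) must be treated separately. The remaining verifications are a fairly direct transfer of the classical best-response fixed-point argument to the abstract convexity setting, with the quasi-concavity hypothesis playing the role of the classical one in ensuring that best-response sets belong to the convexity.
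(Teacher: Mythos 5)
Your proposal is correct and follows essentially the same route as the paper: the best-response sets $B_i(x)$ are exactly the paper's $M_i^x$, the product convexity $\C=\{\prod_i C_i\mid C_i\in\C_i\}$ is the one the paper uses, and the conclusion is drawn from Theorem \ref{KA} in the same way. The only difference is cosmetic: you spell out the normality of the product convexity and use a Berge-type continuity argument for upper semicontinuity where the paper argues directly with neighborhoods, but these are routine variants of the same proof.
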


\begin{proof} Fix any $x\in X$. For each $i\in\{1,\dots,n\}$ consider a set $M_i^x\subset X_i$ defined as follows $M_i^x=\{t\in X_i\mid f_i^x(t)=\max_{s\in X_i}f_i^x(s)\}$. We have that $M_i^x$ is a closed subset $X_i$. Since the function $f_i:X\to\R$ is quasi concave by $i$-th coordinate, we have that $M_i^x\in\C_i$. Define a multimap $F:X\multimap X$ by the formulae $F(x)=\prod_{i=1}^n M_i^x$ for $x\in X$.

Let us show that $F$ is USC. Consider any point $(x,y)\in X\times X$ such that $y\notin F(x)$. Then there exists $i\in\{1,\dots,n\}$ such that $f_i^x(y_i
)<\max_{s\in X_i}f_i^x(s)\}$. Hence we can choose $t_i\in X_i$ such that $f_i(x;y_i)<f_i(x;t_i)$. Since $f_i$ is continuous, there exists a neighborhood
$O_x$ of $x$ in $X$ and a neighborhood $O_{y_i}$ of $y_i$ in $Y_i$ such that for each $x'\in O_x$ and $y_i'\in O_{y_i}$ we have $f_i(x;y_i')<f_i(x;t_i)$. Put $O_y=(\pr_i)^{-1}(O_{y_i})$. Then for each $(x',y')\in O_x\times O_y$ we have $y'\notin F(x')$. Thus the graph of $F$ is closed in $X\times Y$, hence $F$ is upper semicontinuous.

We consider on $X$ the family $\C=\{\prod_{i=1}^n C_i\mid C_i\in\C_i\}$. It is easy to see that $\C$ forms a  normal convexity  on  compactum $X$ such that all convex sets are connected.
Then by Theorem \ref{KA} $F$ has a fixed point which is a Nash equilibrium point.
\end{proof}

Now, the following corollary follows from the previous theorem and Lemma \ref{BC}.

\begin{corollary}\label{NB} Let $f:X=\prod_{i=1}^n X_i\to\R^n$ be a game such that there is defined a $T_2$ binary convexity $\C_i$ on each continuum $X_i$, the function $f$ is continuous  and the function $f_i:X\to\R$ is quasi concave by $i$-th coordinate for each $i\in\{1,\dots,n\}$. Then there exists a Nash equilibrium point.
\end{corollary}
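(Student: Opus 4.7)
The plan is to deduce this as a one-line reduction to Theorem \ref{NN} via Lemma \ref{BC}, since the statement is explicitly framed as a corollary. First I would verify, using Lemma \ref{BC}, that each $(X_i,\C_i)$ satisfies the convexity hypothesis of Theorem \ref{NN}: because $X_i$ is a continuum and $\C_i$ is a $T_2$ binary convexity, Lemma \ref{BC} gives that $\C_i$ is normal and that every member of $\C_i$ is connected. This is exactly the structural hypothesis that Theorem \ref{NN} asks of the convexities on the coordinate compacta.

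The remaining hypotheses are handed to me verbatim: $f$ is continuous and each $f_i$ is quasi-concave in its $i$-th coordinate with respect to $\C_i$. Thus all assumptions of Theorem \ref{NN} are met, and that theorem directly produces a Nash equilibrium point.

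There is essentially no obstacle, which is the intended point of isolating Lemma \ref{BC}: it serves precisely to upgrade the qualitatively simpler axiom of $T_2$-binarity (together with connectedness of the ambient space) to the pair of properties (normality and connectedness of convex sets) consumed by the abstract Kakutani-type fixed point Theorem \ref{KA} used inside the proof of Theorem \ref{NN}. If I wanted to be fussy, I would note that in applying Theorem \ref{NN} there is no extra work on the product $X=\prod_{i=1}^n X_i$, since that theorem's own proof sets up the product convexity $\C=\{\prod_{i=1}^n C_i\mid C_i\in\C_i\}$ and checks normality and connectedness of its members internally; the corollary merely has to verify the coordinatewise input, which is what Lemma \ref{BC} supplies.
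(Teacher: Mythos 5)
Your proposal is correct and matches the paper exactly: the author derives the corollary in one line from Theorem \ref{NN} together with Lemma \ref{BC}, which upgrades $T_2$-binarity on a continuum to normality plus connectedness of convex sets. No further commentary is needed.
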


\section{L-monads and its algebras}

We apply Corollary \ref{NB} to study games defined on algebras of binary L-monads. We recall some categorical notions (see \cite{Mc} and \cite{TZ}
for more details). We define them only for the
category $\Comp$. Let $F:\Comp\to\Comp$ be a covariant functor. A functor $F$ is called  continuous if it preserves the limits of inverse
systems.
In what follows, all functors  assumed to preserve
monomorphisms, epimorphisms, weight of infinite compacta. We also assume that our functors are continuous.
For a  functor $F$ which preserves
monomorphisms and an embedding
$i:A\to X$ we shall identify the space $FA$ and the subspace
$F(i)(FA)\subset FX$.

 A {\it monad} $\T=(T,\eta,\mu)$ in the category
$\Comp$ consists of an endofunctor $T:{\Comp}\to{\Comp}$ and
natural transformations $\eta:\Id_{\Comp}\to T$ (unity),
$\mu:T^2\to T$ (multiplication) satisfying the relations $\mu\circ
T\eta=\mu\circ\eta T=${\bf 1}$_T$ and $\mu\circ\mu T=\mu\circ
T\mu$. (By $\Id_{\Comp}$ we denote the identity functor on the
category ${\Comp}$ and $T^2$ is the superposition $T\circ T$ of
$T$.)

Let $\T=(T,\eta,\mu)$ be a monad in the category ${\Comp}$. The
pair $(X,\xi)$ where $\xi:TX\to X$ is a map is called a $\T$-{\it
algebra} if $\xi\circ\eta X=id_X$ and $\xi\circ\mu X=\xi\circ
T\xi$. Let $(X,\xi)$, $(Y,\xi')$ be two $\T$-algebras. A map
$f:X\to Y$ is called a $\T$-algebras morphism if $\xi'\circ
Tf=f\circ\xi$.

Let $(X,\xi)$ be an $\F$-algebra for a monad $\F=(F,\eta,\mu)$ and
$A$ is a closed subset of $X$. Denote by $f_A$ the quotient map
$f_A:X\to X/A$ (the classes of equivalence are one-point sets $\{x\}$ for $x\in X\setminus A$ and the set $A$) and put $a=f_A(A)$. Denote $A^+=(Ff_A)^{-1}(\eta(X/A)(a))$.    Define the $\F$-{\it convex
hull} $C_\F(A)$ of $A$ as follows
$C_\F(A)=\xi(A^+)$. Put additionally
$C_\F(\emptyset)=\emptyset$. We define the family
$\C_\F(X,\xi)=\{A\subset X|A $ is closed and $\C_\F(A)=A\}$.
Elements of the family $\C_\F(X,\xi)$ we call $\F$-{\it convex}. It was shown in \cite{R1} that the family $\C_\F(X,\xi)$ forms a
convexity on $X$, moreover, each morphism of $\F$-algebras is a $CP$-map. Let us remark that one-point sets are always $\F$-convex.

We don't know if the convexities  we have introduced
 are $T_2$. We consider in this section a
class of monads generating convexities which have this
property. The class of $L$-monads was introduced in \cite{R1} and it contains many well-known monads in
$\Comp$ like superextension, hyperspace, probability measure, capacity, idempotent measure  etc.
 For $\phi\in C(X)$ by $\max\phi$ ($\min\phi$) we denote $\max_{x\in
X}\phi(x)$ ($\min_{x\in X}\phi(x)$) and $\pi_\phi$ or $\pi(\phi)$
denote the corresponding projection $\pi_\phi:\prod_{\psi\in
C(X)}[\min\psi,\max\psi]\to[\min\phi,\max\phi]$.  It was shown in
\cite{R3} that for each L-monad $\F=(F,\eta,\mu)$ we can consider $FX$
as subset of the product $\prod_{\phi\in C(X)}[\min\phi,\max\phi]$,
moreover, we have $\pi_\phi\circ \eta X=\phi$,  $\pi_\phi\circ \mu
X=\pi(\pi_\phi)$ for all $\phi\in C(X)$ and $\pi_\psi\circ
Ff=\pi_{\psi\circ f}$ for all $\psi\in C(Y)$, $f:X\to Y$. We could consider these properties of $L$-monads as a definition \cite{R3}.

We say that an L-monad $\F=(F,\eta,\mu)$ weakly
preserves preimages if for each map $f:X\to Y$ and each closed
subset $A\subset Y$ we have
$\pi_\phi(\nu)\in[\min\phi(f^{-1}(A)),$ $\max\phi(f^{-1}(A))]$ for
each $\nu\in (Ff)^{-1}(A)$ and $\phi\in C(X)$ \cite{R1}. It was shown in \cite{R1} that for each   L-monad $\F$ which weakly
preserves preimages the convexity $\C_\F(FX,\mu X)$ is $T_2$.

\begin{lemma}\label{CC} Let $(X,\xi)$ be an $\F$-algebra for an $L$-monad $\F=(F,\eta,\mu)$ which weakly preserves preimages. Then the map $\xi:FX\to X$ is a CC-map for convexities $\C_\F(FX,\mu)$ and $\C_\F(X,\xi)$ respectively.
\end{lemma}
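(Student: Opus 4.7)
The plan is to verify $C_\F(B)=\xi(B^+)=B$ for $B=\xi(A)$, where $B^+=(Ff_B)^{-1}(\eta(X/B)(b))$ with $f_B:X\to X/B$ the quotient collapsing $B$ to a point $b$. The easy inclusion $B\subset\xi(B^+)$ I will dispatch immediately: naturality of $\eta$ gives $Ff_B(\eta X(x))=\eta(X/B)(f_B(x))=\eta(X/B)(b)$ for every $x\in B$, so $\eta X(B)\subset B^+$, and then $\xi\circ\eta X=\id_X$ forces $B\subset\xi(B^+)$. All the substance is in the reverse inclusion $\xi(B^+)\subset B$.

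My backbone will be the map $F\xi$, the algebra axiom $\xi\circ F\xi=\xi\circ\mu X$, and the quotient-induced map. Since $\xi(A)\subset B$, the map $\xi$ descends to a continuous $\bar\xi:FX/A\to X/B$ with $\bar\xi(a)=b$. Applying $F$ and invoking naturality of $\eta$, for every $\omega\in A^+=(Ff_A)^{-1}(\eta(FX/A)(a))$ the chain
\[Ff_B(F\xi(\omega))=F\bar\xi(Ff_A(\omega))=F\bar\xi(\eta(FX/A)(a))=\eta(X/B)(b)\]
yields $F\xi(A^+)\subset B^+$. Combining with the algebra axiom and the convexity of $A$ (so that $\mu X(A^+)=A$), I obtain $\xi(F\xi(A^+))=\xi(\mu X(A^+))=\xi(A)=B$. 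It therefore suffices to prove that every $\sigma\in B^+$ admits a lift $\omega\in A^+$ with $F\xi(\omega)=\sigma$; the algebra axiom will then give $\xi(\sigma)=\xi(F\xi(\omega))=\xi(\mu X(\omega))\in\xi(A)=B$.

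To construct the lift, I will first use that $F\xi$ is surjective (since $\xi$ is split by $\eta X$ and $F$ preserves epimorphisms) to pick some $\omega_0\in F(FX)$ with $F\xi(\omega_0)=\sigma$; functoriality together with $Ff_B(\sigma)=\eta(X/B)(b)$ then place $Ff_A(\omega_0)$ inside $(F\bar\xi)^{-1}(\eta(X/B)(b))$. Here the weak-preservation hypothesis enters: applied to $\bar\xi:FX/A\to X/B$ at the closed singleton $\{b\}\subset X/B$, it confines the L-monad projections of $Ff_A(\omega_0)$ to the ranges $[\min\psi(\bar\xi^{-1}(\{b\})),\max\psi(\bar\xi^{-1}(\{b\}))]$ for every $\psi\in C(FX/A)$. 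The plan is to combine these projection bounds with the L-monad identities $\pi_\phi\circ\mu X=\pi_{\pi_\phi}$ and $\pi_\psi\circ Ff=\pi_{\psi\circ f}$ and the $\F$-convexity of $A$ to adjust $\omega_0$ into some $\omega\in A^+$ still lying over $\sigma$.

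The main obstacle I anticipate is precisely this last adjustment: weak preservation alone does not pin $Ff_A(\omega_0)$ down to $\eta(FX/A)(a)$, since the fiber $\bar\xi^{-1}(\{b\})$ typically contains more than $\{a\}$. The delicate interplay between the L-monad projection identities and the $\F$-convexity of $A$ is what must absorb this slack; once accomplished, the proof closes via the algebra axiom as described.
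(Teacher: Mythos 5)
You set the problem up correctly, and much of your bookkeeping coincides with the paper's: the easy inclusion $B\subset\xi(B^+)$ via naturality of $\eta$ and $\xi\circ\eta X=\id_X$, the induced map $\bar\xi:FX/A\to X/B$ with $\bar\xi\circ f_A=f_B\circ\xi$, the computation $F\xi(A^+)\subset B^+$, and the observation that the algebra axiom $\xi\circ F\xi=\xi\circ\mu X$ together with $\mu X(A^+)=A$ closes the argument once every $\sigma\in B^+$ is lifted to some $\omega\in A^+$ with $F\xi(\omega)=\sigma$. But the proposal stops exactly at the step that carries all the weight. You take an arbitrary preimage $\omega_0$ of $\sigma$ under $F\xi$ and then state that the ``delicate interplay'' between the L-monad projection identities and the $\F$-convexity of $A$ ``must absorb this slack.'' No mechanism is offered for modifying $\omega_0$ into a point of $A^+$ while keeping it over $\sigma$, and the obstacle you yourself name --- that weak preservation, applied to $\bar\xi$ and $\{b\}$, only confines $\pi_\psi(Ff_A(\omega_0))$ to the interval over the whole fiber $\bar\xi^{-1}(b)$, whereas membership in $A^+$ demands the exact equalities $\pi_\psi=\psi(a)$ --- is precisely the difficulty, not a pointer to its resolution. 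As written this is an announcement of a proof, not a proof; the lemma is not established.

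For comparison, the paper does not lift arbitrarily and adjust: it takes the canonical lift $F(\eta X)(\sigma)$ (you even remark that $F\xi$ is split by $F(\eta X)$, but then discard this section in favour of an unspecified $\omega_0$), which satisfies $F\xi(F(\eta X)(\sigma))=\sigma$ for free, and then invokes weak preservation of preimages for the induced map on quotients to conclude that this particular lift already lies in the relevant set $A^+$, whence $\mu X(F(\eta X)(\sigma))\in A$ and $\xi(\sigma)=\xi(\mu X(F(\eta X)(\sigma)))\in\xi(A)=B$. So the idea missing from your attempt is the specific choice of lift. That said, your diagnosis of where the difficulty sits is accurate: the fiber of $\bar\xi$ over $b$ really can be larger than $\{a\}$, and this is exactly the point at which the paper's own one-line appeal to weak preimage preservation is at its most delicate; identifying the canonical lift does not by itself discharge the verification that it lands in $A^+$, so a complete argument here requires genuinely more than what either your sketch or a bare citation of the splitting provides.
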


\begin{proof} Consider any $B\in \C_\F(FX,\mu)$. We should show that $\xi(B)\in\C_\F(X,\xi)$. Denote by  $\chi:X\to X/\xi(B)$ the  quotient map and put  $b=\chi(\xi(B))$. Consider any $\A\in FX$ such that $F\chi(\A)=(\eta(X/\xi(B))(b))$. We should show that $\xi(\A)\in\xi(B)$.

Consider the quotient map  $\chi_1:FX\to FX/B$ and put $b_1=\chi_1(B)$. There exists a (unique) continuous map $\xi':FX/B\to X/\xi(B)$ such that $\xi'(b_1)=b$ and $\xi'\circ \chi_1=\chi\circ \xi$. Put $\D=F(\eta X)(\A)$. We have $F\xi(\D)=\A$, hence $F\xi'\circ F\chi_1(\D)=F\chi\circ F\xi(\D)=F\chi(\A)=\eta(X/\xi(B))(b)$. Since $F$ weakly preserves preimages, we have $F\chi_1(\D)=\eta(FX/B)(b_1)$. Since $B\in \C_\F(FX,\mu)$, we have $\mu X(\D)\in B$. Hence $\xi(\A)=\xi\circ F\xi(\D)=\xi\circ \mu(\D)\in\xi(B)$. The lemma is proved.
\end{proof}

We call a monad $\F$ binary if
$\C_\F(X,\xi)$ is binary for each $\F$-algebra $(X,\xi)$.

\begin{lemma}\label{BT} Let $\F=(F,\eta,\mu)$  be a binary L-monad  which weakly preserves preimages. Then for each $\F$-algebra $(X,\xi)$  the  convexity $\C_\F(X,\xi)$ is $T_2$.
\end{lemma}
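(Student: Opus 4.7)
The plan is to lift the $T_2$ property from the ambient algebra $(FX,\mu X)$, where it holds by the cited result of \cite{R1}, to the given algebra $(X,\xi)$ along the structure map $\xi$. Fix distinct $x_1,x_2\in X$. Since $\xi\circ\eta X=\id_X$, the map $\xi:FX\to X$ is surjective, and as a morphism of $\F$-algebras it is a CP-map; by Lemma \ref{CC} it is also a CC-map. Let $F_i=\xi^{-1}(x_i)$; being the preimage of the convex singleton $\{x_i\}$ under the CP-map $\xi$, each $F_i$ is $\F$-convex in $FX$, and the sets $F_1,F_2$ are disjoint.

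The task will then reduce to producing $T_1,T_2\in\C_\F(FX,\mu X)$ with $T_1\cup T_2=FX$, $F_1\cap T_2=\emptyset$ and $F_2\cap T_1=\emptyset$. Granting such a pair, I would set $S_i=\xi(T_i)$: Lemma \ref{CC} gives $S_i\in\C_\F(X,\xi)$, surjectivity of $\xi$ gives $S_1\cup S_2=X$, and $x_i\notin S_{3-i}$, since any element of $S_{3-i}$ above $x_i$ would witness a point of $F_i\cap T_{3-i}$.

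To build the screen in $FX$, I would exploit the two available hypotheses on $\C_\F(FX,\mu X)$: $T_2$ (from \cite{R1}) and binarity (inherited from $\F$). For each $(a,b)\in F_1\times F_2$, $T_2$ furnishes a pair $(H^{ab}_1,H^{ab}_2)$ of $\F$-convex sets with $H^{ab}_1\cup H^{ab}_2=FX$, $a\in H^{ab}_1\setminus H^{ab}_2$, and $b\in H^{ab}_2\setminus H^{ab}_1$. Fixing $a\in F_1$, the family $\{H^{ab}_1\}_{b\in F_2}$ consists of $\F$-convex sets all containing $a$; their intersection $H^a_1$ is therefore $\F$-convex, contains $a$, and avoids every $b\in F_2$. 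The inclusion $FX\setminus H^a_1\subset\bigcup_b H^{ab}_2$ then shows that the complementary $\F$-convex set $H^a_2:=C_\F(\bigcup_b H^{ab}_2)$ satisfies $H^a_1\cup H^a_2=FX$ and $F_2\subset H^a_2$, yielding a screen that separates the single point $a$ from the whole set $F_2$.

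The main obstacle is globalising over all $a\in F_1$: neither $\bigcap_{a\in F_1}H^a_1$ (convex and disjoint from $F_2$, but possibly missing points of $F_1$) nor $C_\F(\bigcup_a H^a_1)$ (convex and containing $F_1$, but possibly meeting $F_2$) directly delivers the desired $T_1$. The step I expect to be hardest is an intrinsic normality statement for $T_2$ binary closure structures on compacta, asserting that two disjoint closed convex sets can be screen-separated; I would establish this by a Zorn-type argument on the family of $\F$-convex supersets of $F_1$ disjoint from $F_2$, using binarity to ensure that a maximal such set admits an $\F$-convex partner covering $FX$. This is the analogue, without the continuum assumption, of the first assertion of Lemma \ref{BC}, and is where binarity and $T_2$ must interact most delicately.
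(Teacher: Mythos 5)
Your reduction is exactly the paper's: pull the two points back to the disjoint $\F$-convex fibres $\xi^{-1}(x_1)$, $\xi^{-1}(x_2)$ in $(FX,\mu X)$, screen these two disjoint convex sets inside $FX$, and push the screen forward to $X$ using Lemma \ref{CC} together with the surjectivity of $\xi$ (from $\xi\circ\eta X=\id_X$). That part of your argument is correct and complete. The gap is precisely the step you yourself flag as hardest: you never actually produce the screen in $FX$, i.e.\ you do not prove that the $T_2$ binary convexity $\C_\F(FX,\mu X)$ is normal. The paper closes this step in one line by invoking Lemma \ref{BC}, whose first assertion (normality of a $T_2$ binary convexity) is quoted from Lemma 3.1 of \cite{RZ}; the continuum hypothesis in Lemma \ref{BC} is only used for the connectedness assertion, so the ``analogue without the continuum assumption'' you ask for is already what that citation provides.

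As written, your substitute for that citation does not work. In the pointwise step, knowing $a\notin H^{ab}_2$ for each $b$ does not give $a\notin C_\F\bigl(\bigcup_b H^{ab}_2\bigr)$, so even the screen separating a single point $a$ from $F_2$ is not established. The Zorn-type globalisation is also problematic in this closure-structure setting: the union of a chain of closed convex supersets of $F_1$ disjoint from $F_2$ need not be closed, and its convex hull may meet $F_2$, so maximal elements are not guaranteed to exist; nor do you indicate why a maximal such set would admit a convex partner covering $FX$. None of this machinery is needed: replace your third and fourth paragraphs by an appeal to the normality assertion of Lemma \ref{BC}, applied to $\C_\F(FX,\mu X)$ --- which is $T_2$ by the cited result of \cite{R1} and binary because $\F$ is a binary monad --- and your proof coincides with the paper's.
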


\begin{proof} Consider any two distinct points $x$, $y\in X$. Since $\xi$ is a morphism of $\F$-algebras $(FX,\mu X)$ and $(X,\xi)$, it is a CP-map and we have $\xi^{-1}(x)$, $\xi^{-1}(y)\in \C_\F(FX,\mu)$. Since $\C_\F(FX,\mu)$ is $T_2$ and binary, it is normal by Lemma \ref{BC}. Hence we can choose $L_1$, $L_2\in \C_\F(FX,\mu)$ such that $L_1\cup L_2=FX$ and $L_1\cap\xi^{-1}(x)=\emptyset$, $L_2\cap\xi^{-1}(y)=\emptyset$. Then we have $\xi(L_1)$, $\xi(L_2)\in\C_\F(X,\xi)$ by Lemma \ref{CC}, $\xi(L_1)\cup\xi(L_2)=X$, $x\notin L_1$ and $y\notin L_2$. The lemma is proved.
\end{proof}

Consider any  L-monad $\F=(F,\eta,\mu)$. It is easy to check that for each segment $[a,b]\subset\R$ the pair $([a,b],\xi_{[a,b]})$ is an $F$-algebra where $\xi_{[a,b]}=\pi_{\id_{[a,b]}}$.
Consider a game $f:X=\prod_{i=1}^n X_i\to\R^n$ where for each compactum $X_i$ there exists a map $\xi_i:FX_i\to X_i$ such that the pair $(X_i,\xi_i)$ is an $\F$-algebra.  We say that the function $f_i:X\to\R$ is an $\F$-algebras morphism by $i$-th coordinate if for each $x\in X$ the function $f_i^x:X_i\to\R$ is a morphism of $\F$-algebras $(X_i,\xi_i)$ and $([\min f_i^x,\max f_i^x],\xi_{[\min f_i^x,\max f_i^x]})$.

\begin{theorem}\label{NA} Let $\F=(F,\eta,\mu)$  be a binary L-monad  which weakly preserves preimages. Let $f:X=\prod_{i=1}^n X_i\to\R^n$ be a game such that there is defined an $\F$-algebra map $\xi_i:FX_i\to X_i$ on each continuum $X_i$, the function $f$ is continuous  and the function $f_i:X\to\R$ is an $\F$-algebras morphism by $i$-th coordinate for each $i\in\{1,\dots,n\}$. Then there exists a Nash equilibrium point.
\end{theorem}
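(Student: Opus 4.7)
The plan is to deduce this from Corollary~\ref{NB} applied to the convexities $\C_i := \C_\F(X_i,\xi_i)$ induced by the $\F$-algebra structures. Three ingredients must be assembled: (a) each $\C_i$ is a $T_2$ binary convexity on the continuum $X_i$; (b) the function $f$ is continuous (already assumed); and (c) each $f_i$ is quasi concave in its $i$-th coordinate with respect to $\C_i$.

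Ingredient (a) is essentially automatic. Binarity of $\C_i$ is the very definition of a binary monad, while the $T_2$ property is supplied by Lemma~\ref{BT}, since $\F$ is a binary L-monad that weakly preserves preimages. So the actual work lies in establishing (c).

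To attack (c), fix $x \in X$, $t \in \R$ and write $[a,b] := [\min f_i^x, \max f_i^x]$. Because $f_i^x \colon X_i \to [a,b]$ is a morphism of $\F$-algebras, it is a CP-map for the induced convexities. The cases $t > b$ and $t \le a$ give the empty set and $X_i$ respectively, both members of $\C_i$. What remains is the case $a < t \le b$, where $(f_i^x)^{-1}([t,+\infty)) = (f_i^x)^{-1}([t,b])$; so it suffices to prove that the interval $[t,b]$ is $\F$-convex in the standard algebra $([a,b], \xi_{[a,b]})$, after which the CP-property of $f_i^x$ transfers this back to $X_i$.

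The main obstacle, therefore, is this interval lemma. To handle it I would compute $[t,b]^+$ directly. Let $\chi \colon [a,b] \to [a,t]$ be (the identification of) the quotient map $[a,b] \to [a,b]/[t,b]$, i.e.\ the identity on $[a,t]$ and constantly $t$ on $[t,b]$; then $[t,b]^+ = (F\chi)^{-1}(\eta[a,t](t))$. Now apply the weak-preservation-of-preimages axiom to $\chi$, the closed subset $\{t\} \subset [a,t]$ and $\phi = \id_{[a,b]}$: for every $\nu \in [t,b]^+$ one obtains $\xi_{[a,b]}(\nu) = \pi_{\id_{[a,b]}}(\nu) \in [\min \phi([t,b]), \max \phi([t,b])] = [t,b]$, since $\chi^{-1}(\{t\}) = [t,b]$. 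Hence $C_\F([t,b]) \subseteq [t,b]$, which combined with the general inclusion $[t,b] \subseteq C_\F([t,b])$ yields $\F$-convexity of $[t,b]$. With ingredient (c) in hand, Corollary~\ref{NB} then delivers a Nash equilibrium.
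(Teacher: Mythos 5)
Your proposal is correct and follows essentially the same route as the paper: reduce to Corollary~\ref{NB} via Lemma~\ref{BT} (for the $T_2$ property), the definition of a binary monad (for binarity), and the fact that an $\F$-algebra morphism is a CP-map, hence quasi concave. The only difference is that you explicitly verify the implicit step that the intervals $[t,b]$ are $\F$-convex in $([a,b],\xi_{[a,b]})$ using weak preservation of preimages, whereas the paper's one-line proof takes ``CP-map, hence quasi concave'' for granted; your verification of that interval lemma is sound.
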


\begin{proof} Since for each $x\in X$ the function $f_i^x:X_i\to\R$ is an $\F$-algebras morphism, it is a CP-map, hence quasi concave. Now, our theorem follows from Lemma \ref{BT} and Corollary \ref{NB}.
\end{proof}

\section{Pure and mixed strategies}

Let $\F=(F,\eta,\mu)$  be a binary L-monad  which weakly preserves preimages.
We consider Nash equilibrium for free algebras $(FX,\mu X)$ in this section. Points of a compactum $X$ we call pure strategies and points of $FX$ we call mixed strategies. Such approach is a natural generalization of the model from  \cite{KZ} where spaces of capacities $MX$ were considered.

We consider a game $u:X=\prod_{i=1}^n X_i\to\R^n$ with compact Hausdorff spaces of pure strategies $X_1,\dots,X_n$ and continuous payoff functions $u_i:\prod_{i=1}^n X_i\to\R$.

It is well known how to construct the tensor product of two (or finite number) probability measures. This operation was generalized in \cite{TZ} for each monad in the category $\Comp$. More precisely there was constructed for each compacta $X_1,\dots,X_n$ a continuous map $\otimes:\prod_{i=1}^n F X_i\to F(\prod_{i=1}^n X_i)$ which is natural by each argument and for each $i$ we have $F(p_i)\circ\otimes= \pr_i$ where $p_i:\prod_{j=1}^nX_j\to X_i$ and $\pr_i:\prod_{j=1}^n FX_j\to FX_i$ are natural projections.

We define the payoff functions $eu_i:FX_1\times\dots\times FX_n\to\R$ by the formula  $eu_i=\pi_{u_i}\circ\otimes$. Evidently, $eu_i$ is continuous. Consider any $t\in\R$ and $\nu\in  FX_1\times\dots\times FX_n$. Then we have $(eu_i^\nu)^{-1}[t;+\infty)=\{\mu_i\in FX_i\mid eu_i(\nu;\mu_i)\ge t_i\}=l^{-1}(\pi_{u_i}^{-1}[t;+\infty)\cap\{\nu_i\}\times\dots\times FX_i\times\dots\times\{\nu_n\})$, where $l:FX_i\to\prod_{j=1}^n FX_j$ is an embedding defined by $l(\mu_i)=(\nu;\mu_i)$ for $\mu_i\in FX_i$. A structure of $\F$-algebra on the product $\prod_{j=1}^n FX_j$ of $\F$-algebras $(FX_i,\mu X_i)$ is given by a map $\xi:F(\prod_{i=1}^n FX_i)\to\prod_{i=1}^n FX_i$ defined by the formula $\xi=(\mu X_i\circ F(p_i))_{i=1}^n$. It is easy to check that a product of convex in $FX_i$ sets is convex in $\prod_{i=1}^n FX_i$. Since $\F$ weakly preserves preimages, $\pi_{u_i}^{-1}[t;+\infty)$ is convex in $\prod_{i=1}^n FX_i$. It is easy to see that $l$ is a CP-map, hence the map $eu_i$ is quasiconcave on $i$-th coordinate.

Hence, using Corollary \ref{NB}, we obtain the following theorem.

\begin{theorem} The game with payoff functions $eu_i$ has a Nash equilibrium point provided each $FX_i$ is connected.
\end{theorem}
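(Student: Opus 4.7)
The plan is to observe that essentially all the hard work has been done in the paragraph immediately preceding the theorem, so the proof is a direct application of Corollary \ref{NB} to the game with strategy spaces $FX_1,\dots,FX_n$ and payoff functions $eu_1,\dots,eu_n$. I need to check the four hypotheses of Corollary \ref{NB}: that each $FX_i$ is a continuum equipped with a $T_2$ binary convexity, that the joint payoff $(eu_1,\dots,eu_n)$ is continuous, and that each $eu_i$ is quasi-concave in the $i$-th coordinate.

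First I would record that each $FX_i$ is a continuum: it is compact because $F:\Comp\to\Comp$ sends compacta to compacta, and connectedness is given by hypothesis. For the convexity, I take $\C_i=\C_\F(FX_i,\mu X_i)$, i.e.\ the convexity on the free algebra. Since $\F$ is a binary L-monad which weakly preserves preimages, Lemma \ref{BT} (applied to the $\F$-algebra $(FX_i,\mu X_i)$) gives that $\C_i$ is $T_2$, and binarity is immediate from the definition of a binary monad. So $\C_i$ is a $T_2$ binary convexity on the continuum $FX_i$, as required.

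Next, continuity of $eu_i=\pi_{u_i}\circ\otimes$ was already noted in the paragraph before the theorem, since $\otimes$ and $\pi_{u_i}$ are both continuous. Quasi-concavity of $eu_i$ in the $i$-th coordinate was also established in the preceding paragraph: the preimage $(eu_i^\nu)^{-1}([t,+\infty))$ was written as $l^{-1}\bigl(\pi_{u_i}^{-1}([t,+\infty))\cap\{\nu_1\}\times\cdots\times FX_i\times\cdots\times\{\nu_n\}\bigr)$, and we argued that $\pi_{u_i}^{-1}([t,+\infty))$ is $\F$-convex in the product $\prod_j FX_j$ (using that $\F$ weakly preserves preimages, together with the fact that products of convex sets are convex in the product algebra), while the embedding $l$ is a CP-map, so the preimage is convex in $FX_i$.

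With all hypotheses verified, Corollary \ref{NB} supplies a Nash equilibrium point for the game $(eu_1,\dots,eu_n):\prod_{i=1}^n FX_i\to\R^n$, which is exactly the claim. I do not expect any genuine obstacle here; the only thing one might fuss over is making sure the product $\F$-algebra structure on $\prod_i FX_i$ is the one for which the arguments about convexity of $\pi_{u_i}^{-1}([t,+\infty))$ and products of convex sets actually apply, but the explicit formula $\xi=(\mu X_i\circ F(p_i))_{i=1}^n$ recorded just before the theorem handles this.
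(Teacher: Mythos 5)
Your proposal is correct and follows essentially the same route as the paper: the paper's proof consists precisely of the paragraph preceding the theorem (continuity of $eu_i$, convexity of $\pi_{u_i}^{-1}[t,+\infty)$ via weak preservation of preimages, and quasi-concavity via the CP-embedding $l$) followed by an appeal to Corollary \ref{NB}. You merely make explicit the verification that each $FX_i$ carries a $T_2$ binary convexity, which the paper leaves implicit.
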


Now, consider a game in capacities with Sugeno payoff functions introduced in the beginning of the paper.

The assignment $M$ extends to the capacity functor $M$ in the category of compacta, if the map $Mf:MX\to MY$ for a continuous map of compacta $f:X \to Y$ is defined by the formula $Mf(c)(F)=c(f^{-1}(F))$ where $c\in MX$ and $F$ is a closed subset of $X$. This functor was completed to the monad $\M=(M,\eta,\mu)$ \cite{NZ}, where the components of the  natural transformations are defined as follows: $\eta X(x)(F)=1$ if $x\in F$ and $\eta X(x)(F)=0$ if $x\notin F$;
$\mu X(\C)(F)=\sup\{t\in[0,1]\mid \C(\{c\in MX\mid c(F)\ge t\})\ge t\}$, where $x\in X$, $F$ is a closed subset of $X$ and $\C\in M^2(X)$.
Since capacity monad $\M$ is a binary L-monad  which weakly preserves preimages with $\pi_\varphi(\nu)=\int_X^{Sug} fd\nu$ for any $\nu\in MX$ and $\varphi\in C(X)$ \cite{R2}, we obtain as a consequence

\begin{corollary}\label{NC} A game in capacities $sf:\prod_{i=1}^n MX_i\to\R^n$ with Sugeno payoff functions has a Nash equilibrium point.
\end{corollary}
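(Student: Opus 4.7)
The plan is to specialize the preceding theorem (for the free $\F$-algebras $(FX_i,\mu X_i)$) to the case $\F=\M$. Every hypothesis has already been arranged earlier in the paper, so the work is to identify the Sugeno game $sf$ with the abstractly constructed mixed-strategy game $eu$ for the capacity monad and then to check the mild side condition of that theorem.

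First, by \cite{R2}, $\M=(M,\eta,\mu)$ is a binary $L$-monad which weakly preserves preimages, and its $L$-structure is encoded by $\pi_\varphi(\nu)=\int_X^{Sug}\varphi\,d\nu$ for $\nu\in MX$ and $\varphi\in C(X)$. Combining this with the definition $eu_i=\pi_{u_i}\circ\otimes$ from the previous section yields, for any $(\mu_1,\dots,\mu_n)\in\prod_i MX_i$,
\[
eu_i(\mu_1,\dots,\mu_n)=\pi_{f_i}\bigl(\otimes(\mu_1,\dots,\mu_n)\bigr)=\int^{Sug}_{\prod_j X_j}f_i\,d(\mu_1\otimes\cdots\otimes\mu_n)=sf_i(\mu_1,\dots,\mu_n).
\]
Thus the Sugeno game in capacities is literally the free-algebra mixed-strategy game of the preceding section, applied to the monad $\M$, and the continuous pure payoff $f:\prod_i X_i\to\R^n$ is fed unchanged into that construction.

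The preceding theorem then produces the desired Nash equilibrium, subject to each $MX_i$ being connected. This is the only genuinely new verification and I expect it to be the simplest step: the linear Choquet convexity on $MX_i$ makes $MX_i$ star-shaped via a path $t\mapsto(1-t)c+t\,\eta X_i(x_0)$ for a fixed $x_0\in X_i$, so $MX_i$ is path-connected, hence connected. With connectedness in hand, the preceding theorem applies verbatim and the corollary follows. There is no real conceptual obstacle; the entire argument is a translation of the general monadic theorem into the language of capacities via the two facts from \cite{R2} recalled above.
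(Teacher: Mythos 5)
Your proposal is correct and follows essentially the same route as the paper, which likewise deduces the corollary by specializing the preceding mixed-strategy theorem to the capacity monad $\M$, citing \cite{R2} for the facts that $\M$ is a binary L-monad weakly preserving preimages and that $\pi_\varphi(\nu)=\int_X^{Sug}\varphi\,d\nu$, so that $eu_i=sf_i$. The only addition is your explicit verification that each $MX_i$ is connected (via linear convex combinations of capacities), a hypothesis the paper leaves implicit; that check is valid and worth making.
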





\end{document}